\theoremstyle{thmstyleone}%
\newtheorem{theorem}{Theorem}
\theoremstyle{thmstyletwo}%
\newtheorem{example}{Example}%
\newtheorem{remark}{Remark}%
\theoremstyle{thmstylethree}%
\begin{document}

\title[Article Title]{Correction to:  Powers of the vertex cover ideals (Collect. Math. 65 (2014)
169--181).}


\author*[1]{\fnm{Danchneg} \sur{Lu}}\email{ludancheng@suda.edu.cn}

\author[1]{\fnm{Zexin} \sur{Wang}}\email{zexinwang6@outlook.com}

\equalcont{These authors contributed equally to this work.}

\affil[1]{\orgdiv{School of Mathematical Science}, \orgname{Soochow University}, \orgaddress{\street{No. 1 Shizhi Street}, \city{Suzhou}, \postcode{215021}, \state{Jiangsu}, \country{P.R.China}}}


\abstract{ We point out an essential gap in the proof of one of main results in \cite{M} and then give a corrected proof for it.}

\keywords{Cohen-Macaulay, Cactus graph, Weakly polymatroidal, Vertex cover ideal, Power}



 \maketitle

  In \cite{M}  F. Mohammadi succeeded in classifying all the Cohen-
Macaulay cactus graphs.  Based on this result, he proved in \cite[Theorem 4.3]{M} that if $G$ is a Cohen-Macaulay  cactus graph then all the powers of the vertex cover ideal $J(G)$ of $G$ are weakly polymatroidal. In this note we  illustrate by an example that there exists an essential gap in the proof of
  \cite[Theorem 4.3]{M} and then present a corrected proof for this result.

We adopt the notation from the original paper \cite{M}, which  the reader should have  at  hand when  reading this  note. First of all, We   explain why some parts of the original proof of \cite[Theorem 4.3]{M} is not correct.

There were  several cases to consider in the proof. In case 4,   it is assumed that $z=y_{i3}$, then one deduces that $\{y_{i1}, y_{i2}, y_{i5}\}\subseteq \mathrm{supp}(g_j)$ for some $1\leq j\leq k$. Let us see the following example.
\vspace{2mm}

\begin{example} \em Let $G$  be a 5-cycle whose vertices are labeled $y_1,y_4,y_2,y_3,y_5$ in order. Then $f=y_1y_2y_3^2y_4y_5=(y_1y_2y_3)(y_3y_4y_5)$ and $g=y_1y_2y_3y_4^2y_5=( y_2y_4y_5)(y_1y_3y_4)$ are  minimal monomial generators of $J(G)^2$. We may write $g_1=y_2y_4y_5$ and $g_2=y_1y_3y_4$. Note that $\deg_{y_i}f=\deg_{y_i}g$ for $i=1,2$ and $\deg_{y_3}f>\deg_{y_3}g$. According to the proof of case 4 in \cite{M}, there exists some $g_j$ such that $\{y_{1}, y_{2}, y_{5}\}\subseteq \mathrm{supp}(g_j)$ and  so $y_3g/y_5\in J(G)^2$.   However, it is clear that $\{y_{1}, y_{2}, y_{5}\}\nsubseteq \mathrm{supp}(g_j)$ for $j=1,2$ and $y_3g/y_5\notin J(G)^2$. Furthermore, since $\frac{g}{y_1y_2y_5}$ does not belong to  $J(G)$ , it follows that $g$ can not be written as  $g=g'_1g'_2$  such that $g'_i\in J(G)$  for $i=1,2$ and $\{y_{1}, y_{2}, y_{5}\}\subseteq \mathrm{supp}(g'_i)$ for some $i$.
\end{example}
\vspace{2mm}

This example shows the proof of case 4 is not correct. The proof of case 5 is not correct in a similar way. In the sequel, we give  revised proofs for cases 4 and 5.

\vspace{2mm}
\begin{theorem}\label{thm1} {\em(\cite[Theorem 4.3]{M})}\quad   Let $G$ be a Cohen-Macaulay cactus graph. Then $J(G)^k$ is weakly polymatroidal for all $k\geq 1$.
\end{theorem}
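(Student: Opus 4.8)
The plan is to verify the weakly polymatroidal property straight from the definition, keeping the labelling and the variable order $y_{i1}>y_{i2}>\cdots$ fixed in \cite{M}, and to re-prove only cases~4 and~5, the remaining cases being correct as they stand. So fix two minimal generators of $J(G)^k$; at the first variable $y_t$ (in the order) at which their $y_t$-degrees differ, call the larger one $f$ and the smaller one $g$, and set $z=y_t$. Factor $g=g_1\cdots g_k$ with each $g_\ell$ a minimal vertex cover of $G$. In cases~4 and~5 the vertex $z$ lies in a pentagon block, which I write as the $5$-cycle $y_{i1}-y_{i4}-y_{i2}-y_{i3}-y_{i5}-y_{i1}$ (the cyclic order forced by the example above), whose five minimal vertex covers are $\{y_{i3},y_{i4},y_{i5}\}$, $\{y_{i2},y_{i4},y_{i5}\}$, $\{y_{i1},y_{i3},y_{i4}\}$, $\{y_{i1},y_{i2},y_{i5}\}$, $\{y_{i1},y_{i2},y_{i3}\}$, denoted $C_1,\dots,C_5$. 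The task is to produce an index $j>t$ with $y_t\,g/y_j\in J(G)^k$.

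The correction replaces the erroneous single replacement by a coordinated exchange in two factors. Take case~4, $z=y_{i3}$. If some factor restricts to $C_4=\{y_{i1},y_{i2},y_{i5}\}$ on the pentagon, then replacing that factor's pentagon part by $C_5=\{y_{i1},y_{i2},y_{i3}\}$ (the legal move $y_{i5}\mapsto y_{i3}$) already gives $y_{i3}\,g/y_{i5}\in J(G)^k$, so $j=i5$ works. Otherwise every $y_{i3}$-free factor restricts to $C_2=\{y_{i2},y_{i4},y_{i5}\}$; I then select one such factor together with a factor restricting to $C_3=\{y_{i1},y_{i3},y_{i4}\}$ and change their pentagon parts to $C_1=\{y_{i3},y_{i4},y_{i5}\}$ and $C_5=\{y_{i1},y_{i2},y_{i3}\}$ respectively. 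The first move is $y_{i2}\mapsto y_{i3}$, the second is $y_{i4}\mapsto y_{i2}$; the intermediate variable $y_{i2}$ cancels, the net change to $g$ is $+y_{i3}-y_{i4}$, whence $y_{i3}\,g/y_{i4}\in J(G)^k$ with $j=i4$. This is precisely the route $y_{i4}\to y_{i2}\to y_{i3}$ that succeeds in the example above and that the single-factor reasoning of \cite{M} overlooks.

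It remains to guarantee that, in the second situation, a factor restricting to $C_3$ really occurs, and I would prove this by counting. Let $a_1,\dots,a_5$ be the numbers of $\ell$ for which $g_\ell$ restricts to $C_1,\dots,C_5$ on the pentagon, and $b_1,\dots,b_5$ the analogous counts for $f$; every pentagon-degree is a fixed $\{0,1\}$-combination of these, and $\sum_m a_m=\sum_m b_m=k$. The second situation is $a_4=0$, and the hypotheses $\deg_{y_{i1}}f=\deg_{y_{i1}}g$, $\deg_{y_{i2}}f=\deg_{y_{i2}}g$, $\deg_{y_{i3}}f>\deg_{y_{i3}}g$ read $b_3+b_4+b_5=a_3+a_5$, $b_2+b_4+b_5=a_2+a_5$ and $b_1+b_3+b_5>a_1+a_3+a_5$. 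Assuming in addition $a_3=0$ and eliminating $b_1$ via $\sum_m b_m=a_1+a_2+a_5$, the strict inequality reduces to $b_5>a_5$, which is impossible since $b_5\le b_3+b_4+b_5=a_5$. Hence $a_3>0$, as required.

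The genuine difficulty, and the point demanding the most care, is the attachment of the pentagon to the rest of the cactus at its cut vertex. Using the structural results of \cite{M} one first checks that the restriction $g_\ell\cap V(\text{pentagon})$ is always one of $C_1,\dots,C_5$ and that membership of the cut vertex is controlled by the covers containing it, so that the degree bookkeeping above is legitimate blockwise; the agreement of $f$ and $g$ on the higher variables pins this membership down. One must then verify that each replaced cover is still a minimal vertex cover of all of $G$, not merely of the pentagon: this is exactly what forces the two moves in case~4 to preserve the cut vertex $y_{i1}$ individually ($C_2\to C_1$ keeps it absent, $C_3\to C_5$ keeps it present), so that the parts of these factors lying in the adjacent block are left untouched. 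Case~5 ($z=y_{i4}$, with only $y_{i5}$ available to decrement) is handled by the mirror-image exchange, again routed through an intermediate pentagon vertex and chosen so as to preserve cut-vertex membership factor by factor, its existence secured by the same degree count. With cases~4 and~5 repaired and cases~1--3 unchanged, $J(G)^k$ is weakly polymatroidal for every $k\ge 1$.
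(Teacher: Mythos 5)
Your counting argument is correct and is essentially the paper's (the paper eliminates to $f_{134}<g_{134}$ and $f_{245}<g_{245}$, your elimination to $b_5>a_5$ against $b_5\le b_3+b_4+b_5=a_5$ is an equivalent route to $a_3>0$, $a_2>0$), and your first subcase ($a_4\neq 0$, single replacement $C_4\to C_5$) is fine since there the memberships of both $y_{i1}$ and $y_{i2}$ are unchanged. The genuine gap is in your exchange step. You change the pentagon parts of the two chosen factors \emph{in place} ($C_2\to C_1$ and $C_3\to C_5$) and justify this by noting that membership of ``the cut vertex $y_{i1}$'' is preserved in each factor, ``so that the parts of these factors lying in the adjacent block are left untouched.'' But a basic $5$-cycle in a Cohen--Macaulay cactus graph can be attached to the rest of the graph at \emph{two} vertices: basicness only forbids \emph{adjacent} vertices of degree $\ge 3$, and $y_{i1},y_{i2}$ are non-adjacent, so both may carry attachments --- this is exactly why the paper introduces the two sets $T_1$ and $T_2$. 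Your singular ``its cut vertex'' is the oversight: your two moves do \emph{not} preserve membership of $y_{i2}$ ($C_2\to C_1$ deletes it from the first factor, $C_3\to C_5$ inserts it into the second), and after deleting $y_{i2}$ the edges from $y_{i2}$ into $T_2$ may become uncovered, since the first factor contained $y_{i2}$ and hence, by minimality, its $T_2$-part need not contain the neighbors of $y_{i2}$ in $T_2$. Concretely, attach a pendant path $y_{i2}-w_1-w_2$ to the pentagon (this graph is a Cohen--Macaulay cactus): $g_1$ with support $\{y_{i2},y_{i4},y_{i5},w_2\}$ and $g_2$ with support $\{y_{i1},y_{i3},y_{i4},w_1\}$ are minimal vertex covers, but your replacement turns $g_1$ into the monomial with support $\{y_{i3},y_{i4},y_{i5},w_2\}$, which misses the edge $\{y_{i2},w_1\}$ and so does not lie in $J(G)$. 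Thus your proposed factorization of $y_{i3}g/y_{i4}$ is not a product of covers, and the proof breaks at exactly the same kind of point where the original proof in \cite{M} broke, only pushed from the pentagon into the attached blocks.

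The paper closes this by an extra exchange that your proposal is missing: writing $g_1=y_{i2}y_{i4}y_{i5}u_1u_2$ and $g_2=y_{i1}y_{i3}y_{i4}v_1v_2$ with $\mathrm{supp}(u_a)\cup\mathrm{supp}(v_a)\subseteq T_a$, it does not use $(y_{i3}y_{i4}y_{i5}u_1u_2)(y_{i1}y_{i2}y_{i3}v_1v_2)$ but \emph{swaps the $T_2$-tails}: $y_{i3}g_1g_2/y_{i4}=(y_{i3}y_{i4}y_{i5}u_1v_2)(y_{i1}y_{i2}y_{i3}v_1u_2)$. The factor that ends up without $y_{i2}$ thereby inherits $v_2$ from $g_2$, which did not contain $y_{i2}$ and therefore must contain every neighbor of $y_{i2}$ in $T_2$; one then checks edge by edge (pentagon edges, edges at $y_{i1}$, edges at $y_{i2}$, edges inside $T_1$ or $T_2$, the cactus hypothesis guaranteeing that $T_1$, $\{y_{i1},y_{i2}\}$, $T_2$ partition $V(T)$ with no $T_1$--$T_2$ edges) that both new supports are vertex covers of all of $G$. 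In your example above the swap gives $(y_{i3}y_{i4}y_{i5}w_1)(y_{i1}y_{i2}y_{i3}w_2)$, which works. The identical repair is needed in your case 5, where your ``mirror-image exchange'' again flips $y_{i2}$-membership in both factors: the paper there passes from $(y_{i3}y_{i4}y_{i5}u_1u_2)(y_{i1}y_{i2}y_{i5}v_1v_2)$ to $(y_{i2}y_{i4}y_{i5}u_1v_2)(y_{i1}y_{i3}y_{i4}v_1u_2)$. So while your selection of the two factors and the net move $+y_{i3}-y_{i4}$ agree with the paper, the cross-swap of the tails attached at $y_{i2}$ is the actual content of the correction, and without it your argument does not go through.
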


\begin{proof}  Recall that  a cactus graph is a  connected graph  in which each edge  belongs to at most
one cycle.  As in the original proof of  \cite[Theorem 4.3]{M}, we may assume that $V(G)$ is the disjoint union of $$F_1,\ldots,F_m,G_1,\ldots,G_n,L_1,\ldots,L_t$$
such that the induced graph of $G$ on $F_i$ is a clique of size 3 or 2, the induced graph of $G$ on $G_j$ is a  basic 5-cycle, and $L_l$ is an edge which belongs to a 4-cycle of $G$  for each $i,j,l$. Here, a 5-cycle of a graph $G$ is {\it basic} if   it does not contain two adjacent vertices of degree 3 or more  in $G$, see \cite{HMT}.

Consider the following ordering  of the variables (corresponding to the vertices of $G$): $$x_{11} >\cdots> x_{1k_1}>\cdots> x_{m1}>\cdots>x_{mk_m}>y_{11}>\cdots>y_{15}>$$ $$\cdots>y_{n1}>\cdots>y_{n5}>z_{11}>z_{12}>\cdots>z_{t1}>z_{t2},$$ where $F_{i}=\{{x_{i1},\ldots,x_{ik_i}}\}, 2\leq k_i\leq 3$ and ${x_{ib_i},\ldots ,x_{ik_i}}$ are the free vertices of $F_{i}$ for all $i$, and $y_{j1},y_{j4},y_{j2},y_{j3},y_{j5}$ are the vertices of $G_{j}$ successively such that the vertices $y_{j3},y_{j4},y_{j5}$ are of degree two for all $j$, and $z_{i1},z_{i2}$  are the vertices of $L_{i}$ of degree two for all $i$.
 Note that  for each $i\in [n]$, each minimal vertex cover $C$ of $G$ contains exactly one of the following subsets: $$\{y_{i1},y_{i2},y_{i5}\},\{y_{i2},y_{i4},y_{i5}\},\{y_{i1},y_{i2},y_{i3}\},\{y_{i1},y_{i3},y_{i4}\},\{y_{i2},y_{i4},y_{i5}\}$$

Now, let $f=f_{1} \cdots f_{k}$ and $g=g_{1} \cdots g_{k}$ be  monomials in the minimal generating set of $J(G)^{k}$ with $f_s, g_s\in J(G)$ for $1\leq s\leq k$.  Suppose that there exists a variable $z$ such that $\mathrm{deg}_{z^{'}}f=\mathrm{deg}_{z^{'}}g$ for every variable $z^{'}>z$ and $\mathrm{deg}_{z}f>\mathrm{deg}_{z}g$. We only need to find a variable $w<z$ such that $zg/w\in J(G)^k$ in the  cases where $z=y_{i3}$ and where $z=y_{i4} \mbox{ or } y_{i5}$ for some $i$, which correspond to  case 4 and case 5 of the original proof, respectively. To this end, we introduce some additional notation.

  Let $i$ be the number mentioned above.   First,  for each  triple $\{a,b,c\}$ such that $\{y_{ia},y_{ib}, y_{ic}\}$ is a minimal vertex cover of $G_i$, we set $$f_{abc}=|\{1\leq s\leq k:\{y_{ia},y_{ib},y_{ic}\}\subseteq \mathrm{supp}(f_s) \}|$$  and
   $$ g_{abc}=|\{1\leq s\leq k:\{y_{ia},y_{ib},y_{ic}\}\subseteq \mathrm{supp}(g_s) \}|.$$

Next, we denote by $T$  the induced subgraph of $G$  on the  subset $V(G)\setminus \{y_{i3},y_{i4},y_{i5}\}$, and for $a=1,2,$ we set
 $$T_a=\{z\in V(T)\setminus \{y_{i1}, y_{i2}\}\:\; \mbox{ there exists a path in } T  \mbox{ connecting } z  \mbox{ with } y_{ia}\}.$$
  Since $G$ is a cactus graph,  we have   the subsets $T_1, \{y_{i1},y_{i2}\}$ and $T_2$ are pairwise disjoint and  $V(T)=T_1\cup \{y_{i1},y_{i2}\} \cup T_2$.

\vspace{2mm}

\noindent {\it Case } $z=y_{i3}$:   If $g_{125}\neq0$, there exists $s \in \{1,\cdots, k\}$ such that $\{y_{i1},y_{i2},y_{i5}\}\subseteq \mathrm{supp}(g_s)$. Since the subset $(\mathrm{supp}(g_s)\backslash\{y_{i5}\})\cup\{y_{i3}\}$ is again a minimal vertex cover of $G$, we have $y_{i3}g/y_{i5} \in J(G)^k$ and so we are done. Note that  $g_{125}\neq 0$ if $k=1$, we  assume now that $g_{125}=0$ and $k\geq2$.

Since  $\mathrm{deg}_{y_{ia}}f=\mathrm{deg}_{y_{ia}}g$ for $a=1,2$  and $\mathrm{deg}_{y_{i3}}f>\mathrm{deg}_{y_{i3}}g$,  we have the following formulas:

 \begin{itemize}\item $f_{134}+f_{123}+f_{125}=g_{134}+g_{123}+g_{125}$;    \hfill\ding{202}
 \item $f_{245}+f_{123}+f_{125}=g_{245}+g_{123}+g_{125};$ \hfill\ding{203}
 \item $f_{134}+f_{123}+f_{345}>g_{134}+g_{123}+g_{345};$  \hfill\ding{204}
 \item $f_{134}+f_{245}+f_{123}+f_{125}+f_{345}=g_{134}+g_{245}+g_{123}+g_{125}+g_{345}=k.$ \hfill\ding{205}

  \end{itemize}

  Subtracting \ding{202} from  \ding{205} and  subtracting \ding{203} from  \ding{205}, we obtain
  \begin{itemize}
   \item   $f_{245}+f_{345}=g_{245}+g_{345}$; \hfill \ding{206}

       \item    $f_{134}+f_{345}=g_{134}+g_{345}$. \hfill \ding{207}

        \end{itemize}

   On the other hand, by combining  \ding{204} and \ding{202} with the assumption $g_{125}=0$, we have
    $$f_{134}+f_{123}+f_{345}>g_{134}+g_{123}+g_{345}=f_{134}+f_{123}+f_{125}+g_{345},$$
    and so  $f_{345}>f_{125}+g_{345}\geq g_{345}$. From this together with  \ding{206} and \ding{207},  it follows that
     $f_{134}<g_{134}$ and $f_{245}<g_{245}$. This particularly implies  $g_{134}>0$  and $g_{245}>0$, and thus, there exist $p,q\in \{1,\cdots, k\}$ with $p\neq q$ such that $\{y_{i2},y_{i4},y_{i5}\}\subseteq g_p$ and  $\{y_{i1},y_{i3},y_{i4}\}\subseteq g_q$.  Say $p=1$ and $q=2$. To prove $y_{i3}g/y_{i4}$ belongs to  $J(G)^k$, we only need to prove that $y_{i3}g_1g_2/y_{i4}$ belongs to $J(G)^2$.

 We  may write $g_1, g_2$ as follows: $$g_1=y_{i2}y_{i4}y_{i5}u_1u_2 \mbox{\quad and\quad }  g_2=y_{i1}y_{i3}y_{i4}v_1v_2,$$  where  $\mathrm{supp}(u_i)\cup \mathrm{supp}(v_i)\subseteq T_i$  for $i=1,2$. Then there are the following decomposition: $$y_{i3}g_1g_2/y_{i4}=(y_{i3}y_{i4}y_{i5}u_1u_2)(y_{i1}y_{i2}y_{i3}v_1v_2)=(y_{i3}y_{i4}y_{i5}u_1v_2)(y_{i1}y_{i2}y_{i3}v_1u_2).$$  We next check  that both $\mathrm{supp}(y_{i3}y_{i4}y_{i5}u_1v_2)$ and $\mathrm{supp}(y_{i1}y_{i2}y_{i3}v_1u_2)$ are vertex covers of $G$. To this end, we denote $V_1=\mathrm{supp}(y_{i3}y_{i4}y_{i5}u_1v_2)$ and $V_2=\mathrm{supp}(y_{i1}y_{i2}y_{i3}v_1u_2)$.  Let $e=\{z_1,z_2\}$ be an edge of $G$.

  If $e\subseteq V(G_i)$, then it is clear that $V_1\cap e\neq \emptyset$ and $V_2\cap e\neq \emptyset$.

  If $z_1=y_{i1}$ and $z_2\in T_1$ then $z_2\in \mathrm{supp}(u_1)\cap e$. This  implies $V_1\cap e\neq \emptyset$ and $V_2\cap e\neq \emptyset$; Similarly, we have if $z_1=y_{i2}$ and  $z_2\in T_2$ then $V_1\cap e\neq \emptyset$ and $V_2\cap e\neq \emptyset$.

 Finally  suppose that $e\cap V(G_i)=\emptyset$. Then either $e\subseteq  E(T_1)$ or $e\subseteq E(T_2)$. In the case that $e\subseteq  E(T_1)$, we have $\mathrm{supp}{(u_1)}\cap e\neq \emptyset$ and $\mathrm{supp}{(v_1)}\cap e\neq \emptyset$  and so  $V_1\cap e\neq \emptyset$ and $V_2\cap e\neq \emptyset$. The case that $e\subseteq  E(T_2)$ can be proved similarly.

 Thus, both $V_1$ and $V_2$ are vertex covers of $G$ indeed and so $y_{i3}g_1g_2/y_{i4}$ belongs to $J(G)^2$, as required.
 \vspace{2mm}

 \noindent {\it  Case} $z=y_{i4}\mbox{ or } y_{i5}$:  Since the case where $z=y_{i5}$ is impossible actually, we only need to consider the case where  $z=y_{i4}$. Moreover we may assume that $k\geq2$ as in the  case $z=y_{i3}$.  Because  $\mathrm{deg}_{y_{ia}}f=\mathrm{deg}_{y_{ia}}g$ for $a=1,2,3$  and $\mathrm{deg}_{y_{i4}}f>\mathrm{deg}_{y_{i4}}g$,  we have the following formulas:
\begin{itemize}\item $f_{134}+f_{123}+f_{125}=g_{134}+g_{123}+g_{125}$;    \hfill\ding{202}
 \item $f_{245}+f_{123}+f_{125}=g_{245}+g_{123}+g_{125};$ \hfill\ding{203}
 \item $f_{134}+f_{123}+f_{345}=g_{134}+g_{123}+g_{345};$  \hfill\ding{204}
 \item $f_{134}+f_{245}+f_{345}>g_{134}+g_{245}+g_{345};$  \hfill\ding{205}
 \item $f_{134}+f_{245}+f_{123}+f_{125}+f_{345}=g_{134}+g_{245}+g_{123}+g_{125}+g_{345}=k.$ \hfill\ding{206}

  \end{itemize}

  Subtracting \ding{202}, \ding{203} and  \ding{204} from  \ding{206} respectively, we have
  \begin{itemize}
   \item   $f_{245}+f_{345}=g_{245}+g_{345}$; \hfill \ding{207}
    \item    $f_{134}+f_{345}=g_{134}+g_{345}$; \hfill \ding{208}
     \item    $f_{245}+f_{125}=g_{245}+g_{125}$. \hfill \ding{209}
        \end{itemize}

  We also obtain $f_{245}>g_{245}$ by substituting  \ding{208} into \ding{205}.
  From this together with \ding{207} and \ding{209}, it follows that $f_{345}<g_{345}$ and $f_{125}<g_{125}.$ In particular, $g_{345}>0$ and $g_{125}>0$. Consequently, there exist $p,q\in \{1,\cdots, k\}$ such that $\{y_{i3},y_{i4},y_{i5}\}\subseteq g_p$ and $\{y_{i1},y_{i2},y_{i5}\}\subseteq g_q$.  Say $p=1$ and $q=2$. It suffices to prove that the monomial $y_{i4}g_1g_2/y_{i5}$ belongs to $J(G)^2$.

For this, we  decompose $g_1$ and $g_2$ as follows: $$g_1=y_{i3}y_{i4}y_{i5}u_1u_2 \mbox{\quad and\quad }g_2=y_{i1}y_{i2}y_{i5}v_1v_2 ,$$ where $\mathrm{supp}(u_i)\cup\mathrm{supp}(v_i)\subseteq T_i$  for $i=1,2$.  This implies $$y_{i4}g_1g_2/y_{i5}=(y_{i3}y_{i4}y_{i5}u_1u_2)(y_{i1}y_{i2}y_{i4}v_1v_2)=(y_{i2}y_{i4}y_{i5}u_1v_2)(y_{i1}y_{i3}y_{i4}v_1u_2).$$
   We can check  that both $\mathrm{supp}(y_{i2}y_{i4}y_{i5}u_1v_2)$ and $\mathrm{supp}(y_{i1}y_{i3}y_{i4}v_1u_2)$ are vertex covers of $G$, as in the case where $z=y_{i3}$. Hence $y_{i4}g_1g_2/y_{i5}$ belongs to $J(G)^2$. This finishes the proof. \end{proof}

\vspace{2mm}

\begin{remark} \em The method used in our proof of this note depends heavily on the condition that every edge of $G$  belongs to at most cycle of $G$. The gap of the original proof in \cite{M} was found when we tried to use the idea in the proof to show that $J(G)^k$ is weakly polymatroidal for all $k\geq 1$ if $G$ is a Cohen-Macaulay graph of large girth, whose structure was described explicitly in \cite{HMT}.  But we failed in this regard and we can only  prove in \cite[Proposition 3.6]{LW} that $J(G)$ is weakly polymatroidal for such graphs.
\end{remark}

\section*{Declarations}

\begin{itemize}
\item Funding: This work was supported by National Natural Science Foundation of China  (Grant No. 11971338)
\item The authors have no competing interests to declare that are relevant to the content of this article.

\item We would like to express our sincere thanks to  the referee, whose comments improved the presentation of the paper greatly.
\end{itemize}

\end{document}